\numberwithin{equation}{section}
\newtheorem{thm}{Theorem}[section]
\newtheorem{pro}[thm]{Proposition}
\newtheorem{lem}[thm]{Lemma}
\newtheorem{cor}[thm]{Corollary}
\theoremstyle{definition}
\theoremstyle{remark}
\newcommand{\Hom}{\mathrm{Hom}}
\newcommand{\bd}{\mathrm{bd}}
\numberwithin{equation}{section}
\begin{document}

\title[Homological dimension and dimensional full-valuedness]
{Homological dimension and dimensional full-valuedness}

\author{V. Valov}
\address{Department of Computer Science and Mathematics,
Nipissing University, 100 College Drive, P.O. Box 5002, North Bay,
ON, P1B 8L7, Canada} \email{veskov@nipissingu.ca}

\thanks{The author was partially supported by NSERC
Grant 261914-13.}

 \keywords{homological dimension, homology groups, homogeneous metric $ANR$-compacta}

\subjclass[2010]{Primary 55M10, 55M15; Secondary 54F45, 54C55}
\begin{abstract}
There are different definitions of homological dimension of metric compacta involving either \v{C}ech homology or exact (Steenrod) homology. In this paper we investigate the relation between these homological dimensions with respect to different groups. It is shown that all homological dimensions of a metric compactum $X$ with respect to any field coincide provided $X$ is homologically locally connected with respect to the singular homology up to dimension $n=\dim X$ (br., $X$ is $lc^n$). We also prove that any two-dimensional $lc^2$ metric compactum $X$ satisfies the equality $\dim (X\times Y)=\dim X+\dim Y$ for any metric compactum $Y$ (i.e., $X$ is dimensionally full-valued). This improves the well known result of  Kodama \cite[Theorem 8]{ko1} that every two-dimensional $ANR$ is dimensionally full-valued. Actually, the condition $X$ to be $lc^2$ can be weaken to the existence at every point $x\in X$ of a neighborhood $V$ of $x$ such that the inclusion homomorphism $H_k(\overline V;\mathbb S^1)\to H_k(X;\mathbb S^1)$ is trivial for all $k=1,2$.
\end{abstract}
\maketitle\markboth{}{Homological dimension}





\section{Introduction}
Everywhere below by a space we mean a metric compactum and by a group an abelian group.

The homological dimension $d_GX$ of a space $X$ with respect to a given group $G$ was introduced by Alexandroff \cite{a} in terms of Vietoris homology: $d_GX$ of a space $X$ is the largest integer $n$ such that there exists a closed set $\Phi\subset X$ carrying an
$(n-1)$-dimensional cycle on $\Phi$ which is not-homologous to zero in $\Phi$, but is homologous to zero in $X$. According to Lefschetz
\cite[Theorem 26.1]{le}, the Vietoris homology is isomorphic to \v{C}ech homology $H_*(X;G)$, where $G$ is considered as a discrete group. So, $d_GX$ can be defined as the largest integer $n$ such that there exists a closed set $\Phi\subset X$ and a non-trivial element $\gamma\in H_{n-1}(\Phi;G)$ with
 $i^{n-1}_{\Phi,X}(\gamma)=0$, where $i^{n-1}_{\Phi,X}:H_{n-1}(\Phi;G)\to H_{n-1}(X;G)$ is the homomorphism generated by the inclusion $\Phi\hookrightarrow X$.

Because \v{C}ech homology is not exact, some authors prefer to define homological dimension involving exact homology groups. In particular, Sklyarenko and his students are using the following definition (see \cite{sk}, \cite{sk1}, \cite{ha}): the homological dimension of $X$, denoted by $h\dim_GX$, is the largest integer $n$ such that there exists a closed subset $\Phi\subset X$ and a non-trivial element of $\widehat{H}_n(X,\Phi;G)$. Here $\widehat{H}_*$ is the exact homology introduced by Sklyarenko in \cite{sk}, which is isomorphic to the Steenrod homology \cite{st} for metric compacta.

 We always have $d_GX\leq\dim X$, and if $0<\dim X<\infty$, then $d_{\mathbb Q_1}X=d_{\mathbb S^1}X=\dim X$, where $\mathbb S^1$ is the circle group and $\mathbb Q_1$ is the group of rational elements of $\mathbb S^1$, see \cite{a}.

 In Section 2 we investigate the relations between
 $d_G$ and $h\dim_G$. For example, we show that if $\dim X=n$ and $X$ is homologically locally connected with respect to the singular homology up to dimension $n$ (br., $X$ is $lc^n$), then $d_G=h\dim_GX$ for any field $G$ (see Corollary 2.3). We apply our results from Section 2 to establish in Section 3 that every two-dimensional compactum $X$ is dimensionally full-valued if $X$ is $lc^2$ (Corollary 3.3). The last result improves a theorem of Kodama \cite[Theorem 8]{ko1}. Actually, the condition in Corollary 3.3 $X$ to be $lc^2$ can be weaken (see Theorem 3.2) to the existence at every point $x\in X$ of a neighborhood $V$ of $x$ such that the inclusion homomorphism $H_k(\overline V;\mathbb S^1)\to H_k(X;\mathbb S^1)$ is trivial for all $k=1,2$.


\section{Some relations between $d_G$ and $h\dim_G$}

It was noted that, in the class of metric compacta, the exact homology $\widehat{H}_*$ is isomorphic to Steenrod homology,  where $G$ is any module over a commutative ring with unity. Moreover, for every module $G$
and a compact pair $(X,A)$ there exists a natural transformation $T_{X,A}:\widehat{H}_{*}(X,A;G)\to H_{*}(X,A;G)$ between the exact and \v{C}ech homologies such that $T^n_{X,A}:\widehat{H}_{n}(X,A;G)\to H_{n}(X,A;G)$ is a surjective homomorphism for each $n$, see \cite{sk} (if $A$ is the empty set, we denote $T^k_{X,\varnothing}$  by
$T^n_{X}$). By \cite[Theorem 4]{sk}, this homomorphism is an isomorphism in the following situations: (i) $\dim X=n$: (ii) $G$ admits a compact topology or $G$ is a vector space over a field; (iii) both the \v{C}ech cohomology group $H^n(X,A;\mathbb Z)$  and $G$ are finitely generated modules having finite numbers of relations.

Let $hd_GX$ be the largest integer $n$ such that there exists a closed set $\Phi\subset X$ and a non-trivial $\gamma\in\widehat{H}_{n-1}(\Phi;G)$ with
$\widehat{i}^{n-1}_{\Phi,X}(\gamma)=0$, where $\widehat{i}^{n-1}_{\Phi,X}:\widehat{H}_{n-1}(\Phi;G)\to\widehat{H}_{n-1}(X;G)$ is the inclusion homomorphism. Clearly, $hd_GX$ is the exact homology analogue of $d_GX$.

\begin{pro}
For any group $G$ we have the following inequalities $d_GX\leq h\dim_GX\leq\dim X$ and $hd_GX\leq h\dim_GX$.
\end{pro}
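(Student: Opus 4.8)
The inequalities $hd_GX\le h\dim_GX$ and $h\dim_GX\le\dim X$ are formal, so I would dispose of them quickly and concentrate on $d_GX\le h\dim_GX$. For $hd_GX\le h\dim_GX$: if a closed $\Phi\subset X$ carries $0\ne\gamma\in\widehat{H}_{n-1}(\Phi;G)$ with $\widehat{i}^{n-1}_{\Phi,X}(\gamma)=0$, then, the exact homology sequence of $(X,\Phi)$ being genuinely exact, $\gamma\in\ker\widehat{i}^{n-1}_{\Phi,X}=\operatorname{im}\bigl(\widehat{\partial}\colon\widehat{H}_n(X,\Phi;G)\to\widehat{H}_{n-1}(\Phi;G)\bigr)$, so $\widehat{H}_n(X,\Phi;G)\ne 0$ and $h\dim_GX\ge n$. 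For $h\dim_GX\le\dim X$, put $d=\dim X$ and realize $(X,\Phi)$ as an inverse limit of polyhedral pairs $(X_\alpha,\Phi_\alpha)$ with $\dim X_\alpha\le d$; for $k>d$ one has $H_k(X_\alpha;G)=0$, and since $\Phi_\alpha$ is a subcomplex of the $\le d$-dimensional complex $X_\alpha$ the map $H_d(\Phi_\alpha;G)\to H_d(X_\alpha;G)$ is the inclusion of $d$-cycles $Z_d(\Phi_\alpha;G)\hookrightarrow Z_d(X_\alpha;G)$, hence injective; the exact sequence of the polyhedral pair then forces $H_k(X_\alpha,\Phi_\alpha;G)=0$ for all $k>d$, and the short exact sequence relating $\widehat{H}_{*}$ to the \v{C}ech groups via $\varprojlim$ and ${\varprojlim}^1$ yields $\widehat{H}_k(X,\Phi;G)=0$ for $k>d$. (Alternatively: $\widehat{H}_{*}(X,\Phi;G)\cong\widetilde{\widehat{H}}_{*}(X/\Phi;G)$ and Steenrod homology vanishes above $\dim(X/\Phi)\le\dim X$.)

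For $d_GX\le h\dim_GX$, assume $d_GX\ge n$, so there are a closed $\Phi\subset X$ and $0\ne\gamma\in H_{n-1}(\Phi;G)$ (\v{C}ech) with $i^{n-1}_{\Phi,X}(\gamma)=0$. Since $T^{n-1}_{\Phi}$ is surjective, choose $\widehat{\gamma}\in\widehat{H}_{n-1}(\Phi;G)$ with $T^{n-1}_{\Phi}(\widehat{\gamma})=\gamma$; then $\widehat{\gamma}\ne 0$, and naturality of $T$ gives $T^{n-1}_{X}\bigl(\widehat{i}^{n-1}_{\Phi,X}(\widehat{\gamma})\bigr)=i^{n-1}_{\Phi,X}(\gamma)=0$, so $\widehat{i}^{n-1}_{\Phi,X}(\widehat{\gamma})\in\ker T^{n-1}_{X}$. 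If this element is $0$, exactness of the Steenrod sequence puts $\widehat{\gamma}\in\operatorname{im}\widehat{\partial}$ and hence $\widehat{H}_n(X,\Phi;G)\ne 0$, so $h\dim_GX\ge n$; this already handles, e.g., every field $G$ and every $G$ admitting a compact topology, where $T$ is an isomorphism and $\ker T^{n-1}_{X}=0$. For general $G$ one is free to replace $\widehat{\gamma}$ by $\widehat{\gamma}+\delta$ with $\delta\in\ker T^{n-1}_{\Phi}$, and $\widehat{i}^{n-1}_{\Phi,X}$ carries $\ker T^{n-1}_{\Phi}$ into $\ker T^{n-1}_{X}$, so the sole obstruction to forcing $\widehat{i}^{n-1}_{\Phi,X}(\widehat{\gamma})=0$ is the residue class of $\widehat{i}^{n-1}_{\Phi,X}(\widehat{\gamma})$ in $\operatorname{coker}\bigl(\ker T^{n-1}_{\Phi}\to\ker T^{n-1}_{X}\bigr)$. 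In terms of the polyhedral resolution, $\ker T^{n-1}_{\Phi}={\varprojlim}^1H_n(\Phi_\alpha;G)$ and $\ker T^{n-1}_{X}={\varprojlim}^1H_n(X_\alpha;G)$, and this residue is the value on $\gamma\in\varprojlim\ker i^{n-1}_\alpha$ of the connecting (snake) homomorphism between the two $\varprojlim$/${\varprojlim}^1$ sequences.

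The genuinely delicate point — the one I expect to cost real effort — is the vanishing of that residue class, i.e. that the failure of exactness of the \v{C}ech homology sequence of $(X,\Phi)$ at $H_{n-1}(\Phi;G)$ is exactly what is repaired inside the exact relative group $\widehat{H}_n(X,\Phi;G)$. I would establish it by a chain-level computation in the mapping-telescope model of $\widehat{H}_{*}$: represent $\widehat{\gamma}$ by a coherent family of cycles $z_\alpha\in C_{n-1}(\Phi_\alpha;G)$ together with its gluing chains, use $i^{n-1}_\alpha([z_\alpha])=0$ to choose bounding chains $c_\alpha\in C_n(X_\alpha;G)$, and verify that, after correcting the $c_\alpha$ by $n$-cycles, the resulting ${\varprojlim}^1$-class representing $\widehat{i}^{n-1}_{\Phi,X}(\widehat{\gamma})$ already lies in the image of ${\varprojlim}^1H_n(\Phi_\alpha;G)$. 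This is precisely the compatibility of the $\varprojlim$/${\varprojlim}^1$ sequences for $\Phi$, $X$ and $(X,\Phi)$ with the exact homology sequence of the pair, and it may instead be quoted from Sklyarenko \cite{sk}. With this in place all three inequalities follow, and everything apart from this point is formal.
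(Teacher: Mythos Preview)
Your treatment of $hd_GX\le h\dim_GX$ and $h\dim_GX\le\dim X$ matches the paper's (you give more detail on the second, but the substance is the same vanishing of $\widehat{H}_k(X,\Phi;G)$ for $k>\dim X$).

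For $d_GX\le h\dim_GX$, however, the paper takes a much shorter path than yours, and the difference is worth noting. Instead of lifting $\gamma$ through the \emph{absolute} surjection $T^{n-1}_\Phi$ and then fighting to kill $\widehat{i}^{n-1}_{\Phi,X}(\widehat\gamma)$ modulo $\ker T^{n-1}_X$, the paper stays on the \v{C}ech side one step longer: from $0\neq\gamma\in\ker i^{n-1}_{\Phi,X}$ it invokes \cite[Proposition~4.6]{vv} to conclude that the \emph{relative} \v{C}ech group $H_n(X,\Phi;G)$ is already nonzero, and then applies the surjectivity of the \emph{relative} comparison map $T^n_{X,\Phi}\colon\widehat H_n(X,\Phi;G)\twoheadrightarrow H_n(X,\Phi;G)$ to get $\widehat H_n(X,\Phi;G)\neq 0$. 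That is the whole argument---two lines, with the nontrivial content packaged in the cited proposition.

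Your route is not wrong in spirit, but the step you flag as ``genuinely delicate'' is exactly the step the paper's route eliminates. Your snake-lemma reformulation shows that the vanishing of the obstruction in $\operatorname{coker}\bigl(\ker T^{n-1}_\Phi\to\ker T^{n-1}_X\bigr)$ is \emph{equivalent} to the existence of a lift $\widehat\gamma'$ with $\widehat i(\widehat\gamma')=0$, so your outline is circular unless the chain-level/telescope computation is actually carried out; ``it may instead be quoted from Sklyarenko'' is too vague to count as a proof here. By contrast, using $T^n_{X,\Phi}$ on the relative group avoids the ${\varprojlim}^1$ bookkeeping entirely: the failure of exactness of the \v{C}ech sequence at $H_{n-1}(\Phi;G)$ is precisely what \cite[Proposition~4.6]{vv} handles, and once $H_n(X,\Phi;G)\neq 0$ is known, surjectivity of $T$ finishes immediately. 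If you want to keep your approach self-contained, the cleanest fix is to prove the analogue of that proposition rather than to chase the obstruction class.
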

\begin{proof}
Suppose $d_GX=n$. Then $n\geq 1$ and there is a closed set $\Phi\subset X$ and a non-trivial $\gamma\in H_{n-1}(\Phi;G)$ such that $i^{n-1}_{\Phi,X}(\gamma)=0$. This implies $H_n(X,\Phi;G)\neq 0$, see \cite[Proposition 4.6]{vv}. Consequently, $\widehat{H}_{n}(X,\Phi;G)\neq 0$ (recall that $T^n_{X,\Phi}:\widehat{H}_{n}(X,\Phi;G)\to H_{n}(X,\Phi;G)$ is surjective). Therefore, $d_GX\leq h\dim_GX$. The inequality  $h\dim_GX\leq\dim X$ follows from the fact that all groups $\widehat{H}_{k}(X,\Phi;G)$ are trivial for $k>\dim X$.

For every closed $\Phi\subset X$ and every $n\geq 1$ we have the exact sequence
$$\to\widehat{H}_{n}(X,\Phi;G)\to\widehat{H}_{n-1}(\Phi;G)\to\widehat{H}_{n-1}(X;G)\to\ldots,$$
which yields the inequality $hd_GX\leq h\dim_GX$.
\end{proof}

Recall that a space $X$ is {\em homologically locally connected in dimension $n$} (br., $n-lc$) if for every $x\in X$ and a neighborhood $U$ of $x$ in $X$ there exists a neighborhood $V\subset U$ of $x$ such that the homomorphism $\widetilde{i}_{V,U}^n:\widetilde{H}_n(V;\mathbb Z)\to\widetilde{H}_n(U;\mathbb Z)$ is trivial, where $\widetilde{H}_*(.;.)$ denotes the singular homology groups. The above definition has two variations: (i) if the group $Z$ is replaced by a group $G$, we say that {\em $X$ is $n-lc$ with respect to $G$}; (ii) if every $x\in X$ has a neighborhood $V$ such that the homomorphism  $\widetilde{i}_{V,X}^n:\widetilde{H}_n(V;\mathbb Z)\to\widetilde{H}_n(X;\mathbb Z)$ is trivial, we say that $X$ is semi-$n-lc$. Using the Universal Coefficient Theorem for singular homology, one can show that $X\in n-lc$ with respect to any group $G$
provided $X\in k-lc$ for every $k\in\{n,n-1\}$.
 We say that $X$ is {\em homologically locally connected up to dimension $n$} (br., $X\in lc^n$) provided $X$ is $k-lc$ for all $k\leq n$.

According to \cite[Theorem 1]{mar} the following is true: If $(X,A)$ is a pair of paracompact spaces with both $X$ and $A$ being $lc^n$ and semi-$(n+1)-lc$, then there exists a natural transformation $M_{X,A}$ between the singular and the
\v{C}ech homologies of $(X,A)$ such that for each group $G$ the homomorphisms $M^k_{X,A}:\widetilde{H}_k(X,A;G)\to H_k(X,A;G)$, $k\leq n+1$, are isomorphisms. 
There exists also a similar connection between the singular homology and the exact homology, see \cite[Proposition 9]{sk}: Let $(X,A)$ be a pair of locally compact metric spaces with both $X$ and $A$ being $lc^n$. Then there is a natural transformation $S_{X,A}$ between the singular and the exact homologies with compact supports $\widehat{H}_k^c(X,A;G)$ such that $S^k_{X,A}:\widetilde{H}_k(X,A;G)\to\widehat{H}_k^c(X,A;G)$ is an isomorphism for each $k\leq n-1$ and it is surjective for $k=n$.

Therefore, combining the above results we obtain that if $(X,A)$ is a compact metric pair such that both $X$ and $A$ are $lc^n$, then the homology groups $\widetilde{H}_k(X,A;G)$, $\widehat{H}_k(X,A;G)$ and $H_k(X,A;G)$ are naturally isomorphic for each $k\leq n-1$ and each $G$.

\begin{pro}
If $G$ is any group  and $X$ is $lc^{n}$ with $n=h\dim_GX$, then $d_GX\leq hd_GX=h\dim_GX$.
\end{pro}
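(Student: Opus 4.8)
The plan is to reduce everything to one inequality by means of Proposition~2.1. That proposition already gives $d_GX\le h\dim_GX$ and $hd_GX\le h\dim_GX$, so it suffices to prove $hd_GX\ge n$, where $n:=h\dim_GX$; the asserted inequality $d_GX\le hd_GX$ then follows from $d_GX\le h\dim_GX=hd_GX$. We may assume $n\ge1$. First I would fix a closed $\Phi\subset X$ with $\widehat H_n(X,\Phi;G)\ne0$ (such $\Phi$ exists by the definition of $h\dim_G$, and, using $n\ge1$ together with the exact sequence of a pair $(X,\{x_0\})$, it may be taken nonempty), and consider the exact homology sequence of $(X,\Phi)$:
$$\widehat H_n(X;G)\xrightarrow{\ j\ }\widehat H_n(X,\Phi;G)\xrightarrow{\ \partial\ }\widehat H_{n-1}(\Phi;G)\xrightarrow{\ \widehat i^{\,n-1}_{\Phi,X}\ }\widehat H_{n-1}(X;G).$$

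If $\partial\ne0$ the proof is immediate: pick $\alpha\in\widehat H_n(X,\Phi;G)$ with $\gamma:=\partial(\alpha)\ne0$; exactness gives $\widehat i^{\,n-1}_{\Phi,X}(\gamma)=0$, so $\Phi$ and $\gamma$ witness $hd_GX\ge n$. The whole difficulty is thus concentrated in the case $\partial=0$, i.e. $j$ onto, which forces $\widehat H_n(X;G)\ne0$; this is where $X\in lc^{n}$ must be used. By \cite[Proposition 9]{sk} (with empty subspace) the natural map $\widetilde H_n(X;G)\to\widehat H_n(X;G)$ is surjective, so $\widetilde H_n(X;G)\ne0$, while $\widetilde H_k$, $\widehat H_k$ and $H_k$ agree naturally in degrees $k\le n-1$ on $X$. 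The idea is then to pass to a nonzero singular class $a\in\widetilde H_n(X;G)$ — which, unlike a class of exact homology, has a compact carrier $K$ — fix $x_0\in K$, use that $X$ is $k$-$lc$ for all $k\le n$ to choose nested open neighbourhoods $V_0\subset\overline{V_0}\subset V_1$ of $x_0$ with $\widetilde H_k(V_1;G)\to\widetilde H_k(X;G)$ trivial for $k=n-1,n$, subdivide a representing cycle $\xi$ finely relative to $V_0\subset V_1$, and split $\xi=\xi_1+\xi_2$ with $\xi_1$ the part meeting $\overline{V_0}$. Then $\gamma:=\partial\xi_1=-\partial\xi_2$ is an $(n-1)$-cycle carried by the closed set $\Psi:=\overline{V_1\setminus\overline{V_0}}\subsetneq X$, and it bounds in $X$, hence maps to $0$ in $\widehat H_{n-1}(X;G)$.

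The main obstacle is to show that $\gamma$ is nontrivial in $\widehat H_{n-1}(\Psi;G)$. This needs two things: a Mayer--Vietoris computation for the cover $\{V_1,X\setminus\overline{V_0}\}$ of $X$, using the triviality of $\widetilde H_n(V_1;G)\to\widetilde H_n(X;G)$, to see that the cut really detects $a$, so that $\gamma$ is essential on $\Psi$ in singular homology; and a transfer of this non-triviality from singular to exact homology, which is not automatic since $\Psi$ need not be $lc^n$. I would circumvent the second point by rephrasing the $\partial=0$ case directly in exact homology: by the excision property of the exact homology applied to the pair $(X,X\setminus V_1)$ one has $\widehat H_n(X,X\setminus V_1;G)\cong\widehat H_n(\overline{V_1},\bd V_1;G)$, and the task becomes to choose the neighbourhood $V_1$ (shrinking around $x_0$) so that $\widehat H_n(X;G)\to\widehat H_n(\overline{V_1},\bd V_1;G)$ fails to be surjective; non-surjectivity is exactly $\partial^{(X,X\setminus V_1)}\ne0$, which furnishes the desired witness $\Psi=X\setminus V_1$. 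Producing the right small pair from $\widehat H_n(X;G)\ne0$ together with the local triviality supplied by $lc^n$ is, I expect, the crux of the argument.
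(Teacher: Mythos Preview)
Your reduction via Proposition~2.1 and the treatment of the case $\partial\ne 0$ are fine, but the hard case is not actually closed, and your final ``circumvent'' paragraph leaves the crux unproved. Two ingredients are missing. First, a localisation step: the paper does not split cases on a generic $\Phi$ at all; it invokes \cite[Corollary~2]{sk1} to obtain a point $x$ with $\varinjlim_{x\in U}\widehat H_n(X,X\setminus U;G)\ne 0$, so that $\widehat H_n(X,X\setminus U;G)\ne 0$ for \emph{all} small $U$ around that particular $x$. Your choice of $x_0$ in the carrier of a singular $n$-cycle does not ensure this --- the class $a$ may well lie in the image of $\widetilde H_n(X\setminus\overline{V_0};G)$, in which case your Mayer--Vietoris boundary vanishes and the subdivision argument produces nothing.

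Second, and more seriously, the paper works with the exact sequence of the pair $(\overline W,\bd\overline W)$, not of $(X,X\setminus V_1)$; these have the same relative term by excision but different absolute terms, and the difference is exactly what makes the argument go through. If $\partial\colon\widehat H_n(\overline W,\bd\overline W;G)\to\widehat H_{n-1}(\bd\overline W;G)$ were zero, exactness forces $\widehat H_n(\overline W;G)\ne 0$; since $\overline W\subset V$ and $lc^n$ makes $\widehat i^{\,n}_{\overline W,X}$ trivial, this yields a witness to $hd_GX\ge n+1$, contradicting $hd_GX\le h\dim_GX=n$ from Proposition~2.1. That contradiction --- using the already-established upper bound on $hd_GX$ to rule out the bad case --- is the idea you are missing. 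In your version the first term is $\widehat H_n(X;G)$, and its surjecting onto the relative group gives no contradiction whatsoever.
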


\begin{proof}
According to Proposition 2.1, all we need to show is the equality $hd_GX=n$. This is true if $n=1$ because always $1\leq hd_GX\leq h\dim_GX$. So, let $n\geq 2$. By  \cite[Corollary 2]{sk1}, there is a point $x\in X$ such that the module
$H^x_n=\varinjlim_{x\in U} \widehat{H}_n(X,X\setminus U;G)$ is non-trivial and $n$ is the maximal integer with this property. Therefore,  $\widehat{H}_n(X,X\setminus U;G)\neq 0$ for all sufficiently small neighborhoods $U$ of $x$. Because $X$ is $lc^{n}$, it is $p-lc$ with respect to the group $G$ for all $p\in\{n-1,n\}$. Hence, there exists an open neighborhood $V$ of $x$ such that the inclusion homomorphism $\widetilde{i}^{p}_{V,X}:\widetilde{H}_{p}(V;G)\to\widetilde{H}_{p}(X;G)$ is trivial for $p=n$ and $p=n-1$ . According to the mentioned above natural transformation between the singular and the exact homology with compact supports, we have the following commutative diagrams
{ $$
\begin{CD}
\widetilde{H}_{p}(V;G)@>{{\widetilde{i}^{p}_{V,X}}}>>\widetilde{H}_{p}(X;G)\\
@VV{S^{p}_V}V@VV{S^{p}_{X}}V\\
\widehat{H}_{p}^c(V;G)@>{{\widehat{i}^{p}_{V,X}}}>>\widehat{H}_{p}(X;G)
\end{CD}
$$}\\
such that both homomorphisms $S^{p}_V$ and $S^{p}_{X}$ are isomorphisms for $p=n-1$ and surjective  for $p=n$. This implies that the homomorphism
$\widehat{i}^{p}_{V,X}$ is trivial for all $p\in\{n-1,n\}$. Consequently, if $F\subset V$ is any compact set and $p\in\{n-1,n\}$, then the homomorphism
$\widehat{i}^{p}_{F,X}:\widehat{H}_p(F;G)\to\widehat{H}_p(X;G)$ is also trivial.

Now we choose a neighborhood $W$ of $x$ with $\overline W\subset V$ and $\widehat{H}_n(X,X\setminus W;G)\neq 0$.
Then, by the excision axiom,  $\widehat{H}_n(X,X\setminus W;G)$ is isomorphic to $\widehat{H}_n(\overline W,\bd\,\overline W;G)$.
Consider the exact sequence
$$
\xymatrix{
\to\widehat{H}_{n}(\overline W;G)\to\widehat{H}_{n}(\overline W,\bd\,\overline W;G)\to\widehat{H}_{n-1}(\bd\,\overline W;G)\to\ldots}
$$

We claim that $L=\partial\big(\widehat{H}_{n}(\overline W,\bd\,\overline W;G)\big)\neq 0$,
where $\partial$ denotes the boundary homomorphism
$\partial:\widehat{H}_{n}(\overline W,\bd\,\overline W;G)\to\widehat{H}_{n-1}(\bd\,\overline U;G)$. Indeed, otherwise the exactness of the above sequence  yield $\widehat{H}_{n}(\overline W;G)\neq 0$. This means that
$hd_GX\geq n+1$  (recall that, according to the choice of $V$, the homomorphism $\widehat{i}^{n}_{\overline W,X}:\widehat{H}_{n}(\overline W;G)\to\widehat{H}_{n}(X;G)$ should be trivial), a contradiction.

Therefore, $L$ is a non-trivial subgroup of $\widehat{H}_{n-1}(\bd\,\overline W;G)$. Finally, the exactness of the above sequence implies
$\widehat{i}^{n-1}_{\bd\,\overline W,\overline W}(L)=0$. Hence, $\widehat{i}^{n-1}_{\bd\,\overline W,X}(L)$ is also trivial, which yields $hd_GX=n$.
\end{proof}

\begin{cor}
Let $X$ be $lc^{n}$ with $n=h\dim_GX$. Then $d_GX=hd_GX=h\dim_GX$ provided $G$ has the following property:  the homomorphisms
$T^{n-1}_{\bd\,\overline U}:\widehat{H}_{n-1}(\bd\,\overline U;G)\to H_{n-1}(\bd\,\overline U;G)$ are isomorphisms for all open sets $U\subset X$ $($in particular, this is true for any field $G$$)$.
\end{cor}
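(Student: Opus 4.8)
By Proposition 2.2 we already have $hd_GX=h\dim_GX=n$ and $d_GX\le n$, so the whole statement reduces to the single inequality $d_GX\ge n$. The cases $n\le 1$ are immediate: if $n=0$, then $d_GX\le h\dim_GX=0$; if $n=1$, then $\dim X\ge h\dim_GX=1$, so $X$, being a compactum of positive dimension, has a non-degenerate connected subset $C$, and for any two distinct points $a,b\in C$ the class $[a]-[b]\in H_0(\{a,b\};G)$ is non-trivial (necessarily $G\neq 0$) but dies already in $H_0(C;G)$, hence in $H_0(X;G)$, giving $d_GX\ge 1$. So assume $n\ge 2$.

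The plan for $n\ge 2$ is to push the witness constructed \emph{inside} the proof of Proposition 2.2 from exact homology to \v{C}ech homology through the natural transformation $T$. That proof does more than assert $hd_GX=n$: it produces a closed set of the special form $\Phi=\bd\,\overline W\subset X$ (the boundary of $\overline U$ for $U=W$) together with a \emph{non-trivial} subgroup $L=\partial\big(\widehat H_{n}(\overline W,\bd\,\overline W;G)\big)\subseteq\widehat H_{n-1}(\bd\,\overline W;G)$ satisfying $\widehat i^{\,n-1}_{\bd\,\overline W,X}(L)=0$. Since $\bd\,\overline W$ is the boundary of the closure of an open set, the hypothesis on $G$ applies and $T^{n-1}_{\bd\,\overline W}\colon\widehat H_{n-1}(\bd\,\overline W;G)\to H_{n-1}(\bd\,\overline W;G)$ is an isomorphism; hence it carries $L$ isomorphically onto a non-trivial subgroup of $H_{n-1}(\bd\,\overline W;G)$.

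Now fix a non-trivial element $\gamma=T^{n-1}_{\bd\,\overline W}(\ell)$ with $\ell\in L$, and invoke the naturality of $T$ with respect to the inclusion $\bd\,\overline W\hookrightarrow X$, i.e. the commutative square
$$
\begin{CD}
\widehat H_{n-1}(\bd\,\overline W;G) @>{\widehat i^{\,n-1}_{\bd\,\overline W,X}}>> \widehat H_{n-1}(X;G)\\
@VV{T^{n-1}_{\bd\,\overline W}}V @VV{T^{n-1}_{X}}V\\
H_{n-1}(\bd\,\overline W;G) @>{i^{\,n-1}_{\bd\,\overline W,X}}>> H_{n-1}(X;G).
\end{CD}
$$
It gives $i^{\,n-1}_{\bd\,\overline W,X}(\gamma)=T^{n-1}_{X}\big(\widehat i^{\,n-1}_{\bd\,\overline W,X}(\ell)\big)=T^{n-1}_{X}(0)=0$. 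Thus the closed set $\Phi=\bd\,\overline W$ and the non-trivial class $\gamma\in H_{n-1}(\Phi;G)$ witness $d_GX\ge n$, and together with $d_GX\le n$ this yields $d_GX=n=hd_GX=h\dim_GX$. For the parenthetical assertion, recall that if $G$ is a vector space over a field, then $T^k_Y$ is an isomorphism for every compactum $Y$ and every $k$ by \cite[Theorem 4]{sk}, so the hypothesis holds automatically for any field.

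The one point that needs care — and the sole reason this is not literally a corollary of Proposition 2.1 — is to read off from the \emph{proof} of Proposition 2.2 that the obstruction it constructs lives in an absolute, \v{C}ech-comparable group $\widehat H_{n-1}(\bd\,\overline W;G)$ over a carrier of the form $\bd\,\overline U$, so that the single comparison isomorphism $T^{n-1}_{\bd\,\overline W}$ together with naturality of $T$ transfers it to the \v{C}ech setting of $d_G$; once that is noticed the argument is purely formal.
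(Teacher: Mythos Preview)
Your proof is correct and follows essentially the same route as the paper: extract from the proof of Proposition~2.2 a closed set of the form $\bd\,\overline W$ carrying a non-trivial class in $\widehat H_{n-1}$ that dies in $X$, and push it to \v{C}ech homology through the naturality square for $T$ using the hypothesized isomorphism $T^{n-1}_{\bd\,\overline W}$. Your treatment is slightly more careful about the cases $n\le 1$ (the paper's proof of the corollary tacitly assumes the boundary witness from Proposition~2.2 is available, though that proposition handled $n=1$ by a separate remark), but the core argument is identical.
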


\begin{proof}
We need to show the equality $d_GX=hd_GX$. It follows from the proof of Proposition 2.2 that there exists a point $x\in X$ and its neighborhood $U$ such that $\widehat{H}_{n-1}(\bd\,\overline U;G)\neq 0$ and the homomorphism $\widehat{i}^{n-1}_{\bd\,\overline U,X}$ is trivial. Obviously, we have the commutative diagram
{ $$
\begin{CD}
\widehat{H}_{n-1}(\bd\,\overline U;G)@>{{\widehat{i}^{n-1}_{\bd\,\overline U,X}}}>>\widehat{H}_{n-1}(X;G)\\
@VV{T^{n-1}_{\bd\,\overline U}}V@VV{T^{n-1}_{X}}V\\
H_{n-1}(\bd\,\overline U;G)@>{{i^{n-1}_{\bd\,\overline U,X}}}>>H_{n-1}(X;G)
\end{CD}
$$}\\
such that
$T^{n-1}_{\bd\,\overline U}$ is an isomorphism.  Therefore,
$H_{n-1}(\bd\,\overline U;G)\neq 0$ and $i^{n-1}_{\bd\,\overline U,X}\big(H_{n-1}(\bd\,\overline U;G)\big)=0$. Hence, $d_GX=n$.
\end{proof}

Recall that the cohomological dimension $\dim_GX$ is the largest integer $m$ such that there exists a closed set $A\subset X$ such that \v{C}ech cohomology group $H^m(X,A;G)$ is non-trivial. It is well known that $\dim_GX\leq n$ iff every map $f\colon A\to K(G,n)$ can be extended to a map $\tilde f\colon X\to K(G,n)$, where $K(G,n)$ is the Eilenberg-MacLane space of type $(G,n)$, see \cite{sp}. We also say that a finite-dimensional metric compactum $X$ is {\em dimensionally full-valued} if $\dim X\times Y=\dim X+\dim Y$ for all metric compacta $Y$, or equivalently (see \cite[Theorem 11]{ku}), $\dim_GX=\dim_{\mathbb Z}X$ for any abelian group $G$.
\begin{cor}
If $X$ dimensionally full-valued and $X$ is $lc^{n}$ with $n=\dim X$, then $d_GX=hd_GX=h\dim_GX=\dim_GX=\dim X$ for any field G.
\end{cor}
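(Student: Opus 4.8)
The idea is to run the following chain: dimensional full-valuedness pins down $\dim_GX$, a universal-coefficient argument over the field $G$ transfers this value to $h\dim_GX$, and then Corollary 2.3 does the rest. For the first link, recall from the criterion of Kuzminov \cite[Theorem 11]{ku} quoted above that $X$ being dimensionally full-valued means $\dim_GX=\dim_{\mathbb Z}X$ for every abelian group $G$; since $X$ is a finite-dimensional metric compactum, $\dim_{\mathbb Z}X=\dim X$. Hence $\dim_GX=\dim X=n$ for the given field $G$, so there is a closed set $A\subset X$ with $H^n(X,A;G)\neq 0$.

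Next I would show that $H^n(X,A;G)\neq 0$ forces $\widehat H_n(X,A;G)\neq 0$, which gives $h\dim_GX\geq n$. Each nerve $K$ of a finite open cover of $X$ is a finite complex, on which the universal coefficient theorem over the field $G$ gives $H^n(K;G)\cong\Hom_G(H_n(K;G),G)$. Passing to the (co)limits over the nerve system, and using that the natural map $\varinjlim\Hom_G(H_n(K_\alpha;G),G)\to\Hom_G(\varprojlim H_n(K_\alpha;G),G)$ is injective (the images of the finite-dimensional spaces $H_n(K_\alpha;G)$ inside any fixed stage $H_n(K_{\alpha_0};G)$ stabilize), one gets a natural injection of the \v{C}ech cohomology $H^n(X,A;G)$ into the $G$-dual of the \v{C}ech homology $H_n(X,A;G)$. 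Therefore $H_n(X,A;G)\neq 0$, and since $T^n_{X,A}\colon\widehat H_n(X,A;G)\to H_n(X,A;G)$ is surjective (indeed an isomorphism for a field, by \cite[Theorem 4]{sk}), also $\widehat H_n(X,A;G)\neq 0$. Combined with $h\dim_GX\leq\dim X=n$ from Proposition 2.1 this yields $h\dim_GX=n=\dim X$.

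Now $X$ is $lc^{n}$ with $n=h\dim_GX$, so Corollary 2.3 applies (its coefficient hypothesis holds for every field) and gives $d_GX=hd_GX=h\dim_GX$. Together with $\dim_GX=\dim X$ from the first step this produces the required chain $d_GX=hd_GX=h\dim_GX=\dim_GX=\dim X$.

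The one delicate point, and the step I expect to be the main obstacle, is the middle one: carrying the non-vanishing of $H^n(X,A;G)$ across the inverse limit that defines \v{C}ech homology and then across the transformation $T$ to the exact homology, i.e.\ establishing that over a field one has $h\dim_GX\geq\dim_GX$. Everything else is bookkeeping with Propositions 2.1--2.2 and Corollary 2.3. If one prefers, this middle step can be replaced by a direct appeal to the known identity $h\dim_GX=\dim_GX$ for field coefficients, after which the conclusion is immediate.
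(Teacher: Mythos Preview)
Your argument is correct and follows the same three-step outline as the paper: use dimensional full-valuedness (via \cite{ku}) to get $\dim_GX=\dim X=n$, identify $h\dim_GX$ with $\dim_GX$, and then apply Corollary~2.3. The only difference is that the paper obtains $h\dim_GX=\dim_GX$ by a one-line citation of \cite{ha} (precisely the shortcut you mention in your last paragraph), whereas you spend most of your effort on a direct universal-coefficient/inverse-limit argument for that step.
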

\begin{proof}
By \cite{ha}, $h\dim_GX=\dim_GX$. On the other hand, $\dim_GX=\dim X$ because $X$ is dimensionally full-valued. Then, Corollary 2.3 completes the proof.
\end{proof}

One of the main problems concerning homogeneous finite-dimensional $ANR$ compacta is whether any such space is dimensionally full-valued. According to
\cite[Theorem 1.1]{vv1}, any homogeneous $ANR$ compactum satisfies the hypotheses of next proposition. So, this proposition
provides some information about such spaces which are not dimensionally full-valued.

\begin{pro}
Let $X$ be $lc^{n-1}$ with $\dim X=n$ such that each $x\in X$ has a local base $\mathcal B_x$ with the following property: $H^{n-1}(\bd\,\overline U;\mathbb Z)$ is finitely generated for each $U\in\mathcal B_x$. Then $d_{\mathbb Z}X=h\dim_{\mathbb Z}X=n-1$ provided $X$ is not dimensionally full-valued.
\end{pro}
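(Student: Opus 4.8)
The plan is to establish separately the inequalities $h\dim_{\mathbb Z}X\le n-1$ and $d_{\mathbb Z}X\ge n-1$; since Proposition 2.1 gives $d_{\mathbb Z}X\le h\dim_{\mathbb Z}X\le\dim X=n$, together these force $d_{\mathbb Z}X=h\dim_{\mathbb Z}X=n-1$. First dispose of small $n$: for $n=1$ there is nothing to prove (every at most one-dimensional compactum is dimensionally full-valued), and for $n=2$ one needs only $h\dim_{\mathbb Z}X\le 1$ together with the elementary observation that $d_{\mathbb Z}X\ge 1$ for every non-degenerate compactum. So assume $n\ge 3$.

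For the inequality $h\dim_{\mathbb Z}X\le n-1$ I would argue by contradiction and show that $h\dim_{\mathbb Z}X=n$ would make $X$ dimensionally full-valued. Pick a closed $\Phi\subset X$ with $\widehat H_n(X,\Phi;\mathbb Z)\ne 0$. As $\dim X=n$ we have $H^{n+1}(X,\Phi;\mathbb Z)=0$, so the universal coefficient sequence relating the exact homology to \v{C}ech cohomology,
\[
0\to\Ext\bigl(H^{n+1}(X,\Phi;\mathbb Z),G\bigr)\to\widehat H_n(X,\Phi;G)\to\Hom\bigl(H^{n}(X,\Phi;\mathbb Z),G\bigr)\to 0,
\]
collapses for $G=\mathbb Z$ to $\widehat H_n(X,\Phi;\mathbb Z)\cong\Hom(H^{n}(X,\Phi;\mathbb Z),\mathbb Z)$; being nonzero, this shows $\mathbb Z$ is a direct summand of $H^{n}(X,\Phi;\mathbb Z)$. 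On the other hand the universal coefficient sequence for \v{C}ech cohomology (the direct limit over the nerves of the cochain universal coefficient sequences), together with $H^{n+1}(X,\Phi;\mathbb Z)=0$, gives $H^{n}(X,\Phi;G)\cong H^{n}(X,\Phi;\mathbb Z)\otimes G$, which contains $\mathbb Z\otimes G=G$. Thus $H^{n}(X,\Phi;G)\ne 0$ for every $G\ne 0$, so $\dim_GX\ge n$; since always $\dim_GX\le\dim X=n$, we get $\dim_GX=n=\dim_{\mathbb Z}X$ for all $G$, and by \cite[Theorem 11]{ku} $X$ is dimensionally full-valued, a contradiction. (This step uses neither the local connectedness nor the finite-generation hypothesis.)

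Next I would show $h\dim_{\mathbb Z}X\ge n-1$ and then $d_{\mathbb Z}X\ge n-1$. Since $\dim_{\mathbb Z}X=\dim X=n$, choose a closed $A$ with $H^{n}(X,A;\mathbb Z)\ne 0$. By the first inequality $\widehat H_n(X,A;\mathbb Z)=0$, so the collapsed universal coefficient sequence forces $\Hom(H^{n}(X,A;\mathbb Z),\mathbb Z)=0$; hence $H^{n}(X,A;\mathbb Z)$ is a nonzero, non-free group. Since the \v{C}ech cohomology of a compact pair with integral coefficients is countable (a direct limit along a countable cofinal system of finite covers of finitely generated groups), Stein's theorem yields $\Ext(H^{n}(X,A;\mathbb Z),\mathbb Z)\ne 0$, and the degree-$(n-1)$ universal coefficient sequence embeds this group into $\widehat H_{n-1}(X,A;\mathbb Z)$, so $h\dim_{\mathbb Z}X=n-1$. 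Now $X$ is $lc^{n-1}$ with $n-1=h\dim_{\mathbb Z}X$, so I would repeat the argument of Proposition 2.2: \cite[Corollary 2]{sk1} gives a point $x'$ with $\widehat H_{n-1}(X,X\setminus U;\mathbb Z)\ne 0$ for all sufficiently small neighborhoods $U$ of $x'$, and (as $X$ is $(n-1)$- and $(n-2)$-$lc$ with respect to $\mathbb Z$) a neighborhood $V$ of $x'$ with $\widehat i^{p}_{F,X}$ trivial for every compact $F\subset V$ and $p\in\{n-2,n-1\}$. Choose $W\in\mathcal B_{x'}$ with $\overline W\subset V$ and $\widehat H_{n-1}(\overline W,\bd\overline W;\mathbb Z)\cong\widehat H_{n-1}(X,X\setminus W;\mathbb Z)\ne 0$ (excision). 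As there, $L=\partial\bigl(\widehat H_{n-1}(\overline W,\bd\overline W;\mathbb Z)\bigr)\subseteq\widehat H_{n-2}(\bd\overline W;\mathbb Z)$ is nonzero (otherwise $\widehat H_{n-1}(\overline W;\mathbb Z)\ne 0$, giving $hd_{\mathbb Z}X\ge n>h\dim_{\mathbb Z}X$) and $\widehat i^{n-2}_{\bd\overline W,X}(L)=0$. Finally, because $W\in\mathcal B_{x'}$ the group $H^{n-1}(\bd\overline W;\mathbb Z)$ is finitely generated, which makes the $\varprojlim^{1}$-term of the Milnor sequence for $\widehat H_{n-2}(\bd\overline W;\mathbb Z)$ vanish; hence $T^{n-2}_{\bd\overline W}$ is an isomorphism, and pushing $L$ through the commutative square of the transformations $T$, exactly as in the proof of Corollary 2.3, gives a nonzero $\gamma\in H_{n-2}(\bd\overline W;\mathbb Z)$ with $i^{n-2}_{\bd\overline W,X}(\gamma)=0$. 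Therefore $d_{\mathbb Z}X\ge n-1$, and $d_{\mathbb Z}X=h\dim_{\mathbb Z}X=n-1$.

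The step I expect to be the genuine obstacle is the last one: showing that finite generation of $H^{n-1}(\bd\overline W;\mathbb Z)$ is precisely what forces $T^{n-2}_{\bd\overline W}$ to be an isomorphism, i.e.\ that the inverse system of the $(n-1)$-st homology groups of the nerves of $\bd\overline W$ is Mittag--Leffler, so that the $(n-2)$-dimensional vanishing cycle produced in exact homology descends to \v{C}ech homology. The remaining work is routine manipulation of the universal-coefficient and excision isomorphisms and a direct transcription of Propositions 2.1--2.2 and Corollary 2.3.
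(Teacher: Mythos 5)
Your argument is correct, and for the decisive half --- producing a nonzero \v{C}ech class on a boundary that bounds in $X$ --- it is the same route as the paper: rerun the proof of Proposition 2.2 with $G=\mathbb Z$ and $n-1=h\dim_{\mathbb Z}X$, take the small neighborhood from the base $\mathcal B_{x'}$ (a point the paper leaves implicit), and descend from exact to \v{C}ech homology through $T^{n-2}_{\bd\,\overline W}$ exactly as in Corollary 2.3. Where you genuinely diverge is the first equality $h\dim_{\mathbb Z}X=n-1$: the paper simply quotes \cite[p.364]{ha}, while you derive both inequalities from Sklyarenko's universal coefficient sequence --- if $h\dim_{\mathbb Z}X=n$ then $\mathbb Z$ splits off $H^{n}(X,\Phi;\mathbb Z)$, so $\dim_GX=n$ for every $G$ and $X$ would be dimensionally full-valued by \cite[Theorem 11]{ku}; and $h\dim_{\mathbb Z}X\geq n-1$ via countability of \v{C}ech cohomology of compact metric pairs together with Stein's theorem, which forces $\Ext(H^{n}(X,A;\mathbb Z),\mathbb Z)\neq 0$. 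That is a legitimate self-contained replacement for the citation (independence from \cite{ha} at the price of Stein's theorem), and your separate treatment of $n\leq 2$ is cleaner than the paper, which tacitly assumes $h\dim_{\mathbb Z}X\geq 2$ when it re-enters the proof of Proposition 2.2 (only note that the elementary fact you want is $d_{\mathbb Z}X\geq 1$ whenever $\dim X\geq 1$, not for every non-degenerate compactum --- the Cantor set has $d_{\mathbb Z}=0$ --- which is what you actually use since $\dim X=2$). Finally, the step you flag as the ``genuine obstacle'' --- finite generation of $H^{n-1}(\bd\,\overline W;\mathbb Z)$ forcing $T^{n-2}_{\bd\,\overline W}$ to be an isomorphism --- is not something you are expected to prove from scratch: it is exactly the finite-generation case of \cite[Theorem 4]{sk} already recorded in Section 2 and invoked verbatim in the paper's proof, and your Milnor-sequence/Mittag--Leffler description (vanishing of $\varprojlim^1$ of the nerve homologies) is the standard way to see it; so there is no real gap there, only a missing citation.
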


\begin{proof}
Since $n-1\leq h\dim_{\mathbb Z}X\leq n$ and $X$ is not dimensionally full valued, $h\dim_{\mathbb Z}X=n-1$ (see \cite[p.364]{ha}). So, by Proposition 2.2 and the arguments from the proof of Proposition 2.2, we have $d_{\mathbb Z}X\leq n-1$ and there is a point $x\in X$ and its neighborhood $U$ such that $\widehat{H}_{n-2}(\bd\,\overline U;\mathbb Z)\neq 0$ and the homomorphism $\widehat{i}^{n-2}_{\bd\,\overline U,X}$ is trivial.
Since $H^{n-1}(\bd\,\overline U;\mathbb Z)$ are finitely generated, Theorem 4(4) from \cite{sk} implies that
$T^{n-2}_{\bd\,\overline U}:\widehat{H}_{n-2}(\bd\,\overline U;\mathbb Z)\to H_{n-2}(\bd\,\overline U;\mathbb Z)$ is an isomorphism. Hence,
$H_{n-2}(\bd\,\overline U;\mathbb Z)\neq 0$ and triviality of $\widehat{i}^{n-2}_{\bd\,\overline U,X}$ yields triviality of
$i^{n-2}_{\bd\,\overline U,X}$. Therefore, $d_{\mathbb Z}X=n-1$.
\end{proof}

The last statement in this section is the following analogue of Corollary 2.8 from \cite{vv}.
\begin{pro}
Suppose $X$ is a compact space and $d_GX=n$. Then there exists a point $x\in X$ and a local base $\mathcal B_x$ at $x$ such that $\bd U$ contains a closed set $C_U\subset X$ with $H_{n-1}(C_U;G)\neq 0$ for any $U\in\mathcal B_x$.
\end{pro}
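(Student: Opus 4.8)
The plan is to localise the obstruction recorded in the definition of $d_GX$. By that definition there are a closed set $\Phi\subset X$ and a nontrivial $\gamma\in H_{n-1}(\Phi;G)$ with $i^{n-1}_{\Phi,X}(\gamma)=0$, and then $H_n(X,\Phi;G)\neq 0$ by \cite[Proposition 4.6]{vv}. The statement to be proved is equivalent to the existence of one point $x\in X$ such that arbitrarily small neighbourhoods $U$ of $x$ contain in their boundaries a closed set $C_U$ with $H_{n-1}(C_U;G)\neq 0$: such an $x$ then has a decreasing sequence of such neighbourhoods that may be taken as $\mathcal B_x$. (For $n=1$ this is elementary: since $d_GX\ge 1$ forces $\dim X\ge 1$ and $G\neq 0$, one may pick a point $x$ at which $X$ has local dimension $\ge 1$, around which one can choose a neighbourhood base of non-clopen sets, so $\bd U\neq\varnothing$ and any one-point subset of $\bd U$ works.) So from now on I would take $n\ge 2$ and concentrate on producing the point $x$.

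The first step is to make the carrier minimal. Let $\mathcal A$ be the family of closed $A\subset X$ with $\ker\big(i^{n-1}_{A,X}\colon H_{n-1}(A;G)\to H_{n-1}(X;G)\big)\neq 0$; it contains $\Phi$. Since \v{C}ech homology is continuous on compacta, for a decreasing chain $\{A_t\}$ one has $H_{n-1}(\bigcap_tA_t;G)=\varprojlim_tH_{n-1}(A_t;G)$, and by left exactness of $\varprojlim$ also $\ker i^{n-1}_{\bigcap_tA_t,X}=\varprojlim_t\ker i^{n-1}_{A_t,X}$; choosing the chain so that one fixed nonzero class is pulled back at each step (possible as long as the current class is carried by a proper closed subset), one reaches a closed set — still written $\Phi$ — carrying a nontrivial dying class $\gamma$ that is carried by no proper closed subset of $\Phi$, so in particular $\Phi$ is not a point.

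The second step is to localise the filling. From $H_n(X,\Phi;G)\neq 0$ and surjectivity of $T^n_{X,\Phi}$ we get $\widehat H_n(X,\Phi;G)\neq 0$; Sklyarenko's analysis of the exact-homology sheaf (the mechanism behind \cite[Corollary 2]{sk1}) then yields a point $x$ and arbitrarily small open $U\ni x$ with $\widehat H_n(X,X\setminus U;G)\neq 0$. Taking the $U$'s so that excision applies, this reads $\widehat H_n(\overline U,\bd U;G)\neq 0$, and the exact homology sequence of $(\overline U,\bd U)$ produces a nontrivial class in $\widehat H_{n-1}(\bd U;G)$ — the competing possibility of a nontrivial class in $\widehat H_n(\overline U;G)$ is to be excluded, or absorbed into a descent on $\operatorname{diam}U$, using the minimality of $\Phi$. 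Finally, passing back from the exact to the \v{C}ech theory: on a suitable closed $C_U\subset\bd U$ on which the transformation $T^{n-1}_{C_U}$ is an isomorphism (arranged through \cite[Theorem~4]{sk}, taking $C_U$ of covering dimension $\le n-1$), one concludes $H_{n-1}(C_U;G)\neq 0$, as wanted.

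The crux — and the point I expect to cost the most — is the degree bookkeeping in the second step. Only $d_GX=n$ is assumed, whereas $h\dim_GX$ may be strictly larger; the localisation machinery for exact homology naturally reports the top degree $h\dim_GX$, so forcing the detected class down to degree exactly $n$ (and then $n-1$ on the boundary) is where the minimality obtained in the first step, the non-exactness of \v{C}ech relative to exact homology, and the natural transformations $T$, $S$ between the two theories must be combined most carefully; this is precisely why the first step cannot be bypassed. The remaining matters — fixing a neighbourhood base on which excision is literally valid, and guaranteeing that a nonzero class really survives the inverse limit in the minimality argument (automatic when the relevant \v{C}ech groups are compact, e.g. for $G=\mathbb S^1$, and requiring a short supplementary argument in general) — are routine.
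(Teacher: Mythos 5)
The central gap is in your second step, which is precisely the step you defer. Proposition 2.6 assumes nothing beyond compactness and $d_GX=n$ --- no $lc$-type hypotheses --- whereas the localization machinery you invoke (surjectivity of $T^n_{X,\Phi}$, Sklyarenko's \cite[Corollary 2]{sk1}, the exact sequence of $(\overline U,\bd\,\overline U)$) is exactly the machinery of Propositions 2.2 and 2.3, where the decisive moves (excluding a nontrivial class in $\widehat H_n(\overline U;G)$, and transporting triviality of inclusion homomorphisms down to boundaries) depend on $X$ being $lc^n$ and on $n$ being $h\dim_GX$, i.e.\ the maximal degree of nontrivial local exact homology. Here $h\dim_GX$ may well exceed $n$, the point produced by \cite[Corollary 2]{sk1} detects local homology in the top degree and has no a priori relation to the dying class $\gamma$, and without local connectivity nothing rules out that the relative class is absorbed by $\widehat H_n(\overline U;G)$; your ``minimality of $\Phi$'' is not wired into any of this, so the hole sits exactly at the center of the argument. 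Two further steps are also unsupported: (i) since only $d_GX=n$ is assumed, $\dim X$ and hence $\dim\bd\,\overline U$ may be larger than $n$ (even infinite), and a nonzero class of $\widehat H_{n-1}(\bd\,\overline U;G)$ need not be carried by a closed subset of covering dimension $\le n-1$, so the set $C_U$ on which you want $T^{n-1}_{C_U}$ to be an isomorphism need not exist; (ii) your minimal-carrier construction requires selecting compatible nonzero classes along a decreasing chain, and that inverse limit of sets of classes can be empty for general $G$ --- and in any case minimality of the carrier is not what the later steps would need.

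For comparison, the paper stays entirely inside \v{C}ech theory and minimizes the other object: keeping $(\Phi,\gamma)$ fixed, it takes (via \cite[Lemma 6]{vv}, where continuity of \v{C}ech homology lets the condition $i^{n-1}_{\Phi,K}(\gamma)=0$ pass to intersections of chains with no class-selection issue) a closed set $K\supset\Phi$ minimal with respect to killing $\gamma$. Then any $x\in K\setminus\Phi$ works: for $U$ with $\overline U\cap\Phi=\varnothing$ put $A=K\setminus U$, $B=\overline U\cap K$, $C_U=A\cap B\subset\bd\,U$; minimality gives $\gamma_U=i^{n-1}_{\Phi,A}(\gamma)\neq 0$ while $i^{n-1}_{A,K}(\gamma_U)=0$, and a Mayer--Vietoris argument carried out on nerves of covers of $K$ adapted to the triad $(K;A,B)$ forces $H_{n-1}(C_U;G)\neq 0$. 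That route needs no exact homology, no local connectedness and no dimension bound --- exactly the three inputs your sketch cannot supply.
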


\begin{proof}
Since $d_GX=n$, there is a closed set $\Phi\subset X$ such that $i^{n-1}_{\Phi,X}(\gamma)=0$ for some non-trivial $\gamma\in H_{n-1}(\Phi;G)$. By \cite[Lemma 6]{vv}, there exists a closed set $K\subset X$ containing $\Phi$ such that $i^{n-1}_{\Phi,K}(\gamma)=0$ but $i^{n-1}_{\Phi,F}(\gamma)\neq 0$ for all proper closed subsets $F$ of $K$ containing $\Phi$. Choose $x\in K\setminus\Phi$ and let $\mathcal B_x$ consist of all open neighborhoods $U$ of $x$ with $\overline U\cap \Phi=\varnothing$. Denote  $A=K\setminus U$, $B=\overline U\cap K$ and $C_U=\bd_KB$. Then $K=A\cup B$ and $A\cap B=C_U$. Since $A$ is a proper subset of $K$ and contains $\Phi$, $\gamma_U=i^{n-1}_{\Phi,A}(\gamma)$ is a non-trivial element of $H_{n-1}(A;G)$. Consider the following, in general not exact, sequence
{ $$
\begin{CD}
H_{n-1}(C_U;G)@>{{\psi}}>>H_{n-1}(A;G)\oplus H_{n-1}(B;G)@>{{\phi}}>>H_{n-1}(K;G),
\end{CD}
$$}
where $\psi(\theta)=\big(i_{C_U,A}^{n-1}(\theta),i_{C_U,B}^{n-1}(\theta)\big)$ and $\phi\big((\theta_1,\theta_2)\big)=i_{A,K}^{n-1}(\theta_1)- i_{B,K}^{n-1}(\theta_2)$. Since $i_{A,K}^{n-1}(\gamma_U)=i_{\Phi,K}^{n-1}(\gamma)=0$, $\phi\big((\gamma_U,0)\big)=0$.

Take a family $\{\omega_\alpha\}$ of finite open covers of $K$ such that the homology groups $H_{n-1}(C_U;G)$,
$H_{n-1}(A;G)$, $H_{n-1}(B;G)$ and $H_{n-1}(K;G)$ are limit of the inverse systems $\{H_{n}(N_\alpha^{C_U};G),\pi^*_{{\beta},\alpha}\}$,
$\{H_{n-1}(N_\alpha^{A};G),\pi^*_{{\beta},\alpha}\}$, $\{H_{n-1}(N_\alpha^{B};G),\pi^*_{{\beta},\alpha}\}$ and $\{H_{n-1}(N_\alpha^{K};G),\pi^*_{{\beta},\alpha}\}$. Here, $N_\alpha^{F}$, $F\subset K$, denotes the nerve of $\omega_\alpha$ restricted on $F$, and
$\pi_{{\beta},\alpha}:N_{\beta}^F\to N_\alpha^F$ are the corresponding simplicial maps with $\beta$ being a refinement of $\alpha$. Because $\gamma_U\neq 0$, there is $\alpha_0$ with $\gamma_{\alpha_0}=\pi_{\alpha_0}^*(\gamma_U)\neq 0$, where $\pi_\alpha:A\to N_\alpha^{A}$ is the natural map. Since $K$ is a metric space, we can suppose that each cover $\omega_\alpha$ has the following property: if $\{V_j:j=1,..,k\}\subset\omega_\alpha$ such that $\bigcap_{j=1}^{j=k}V_j$ meets both $A$ and $B$, then it also meet $C_U$ (this can be done by considering first a finite open family $\omega'$ in $K$, which covers $C_U$ and satisfies the following condition: for any $\Omega\subset\omega'$ we have $\cap\Omega\neq\varnothing$ if and only if $\cap\Omega$ meets $C_U$; then add to $\omega'$ open subsets of $K$ disjoint from $C_U$ to obtain a cover of $K$). The advantage of this type of covers is that the intersection of the nerves $N_\alpha^{A}$ and $N_\alpha^{B}$ (considered as  sub-complexes of $N_\alpha^K$) is the nerve $N_\alpha^{C_U}$ and $N_\alpha^{A}\cup N_\alpha^{B}=N_\alpha^K$ for all $\alpha$.

Then, we have the Mayer-Vietoris exact sequence (the coefficient group $G$ is suppressed)
{ $$
\begin{CD}
H_{n-1}(N_{\alpha_0}^{C_U})@>{{\psi_{\alpha_0}}}>>H_{n-1}(N_{\alpha_0}^{A})\oplus H_{n-1}(N_{\alpha_0}^{B})@>{{\phi_{\alpha_0}}}>>H_{n-1}(N_{\alpha_0}^{K})\cdots
\end{CD}
$$}
Obviously, $\phi\big((\gamma_U,0)\big)=0$ implies $\phi_{\alpha_0}\big((\gamma_{\alpha_0},0)\big)=0$, and the exactness of this sequence yields
$H_{n-1}(N_{\alpha_0}^{C_U};G)\neq 0$. Therefore, $H_{n-1}(C_U;G)\neq 0$.
\end{proof}

\section{Dimensionally full-valued compacta}

In this section we are going to improve the result of Kodama \cite[Theorem 8]{ko1} that every two-dimensional $ANR$-compactum is dimensionally full-valued.
We need the following statement:
\begin{lem}
Suppose $G$ is a torsion free group and $X$ is compact such that $h\dim_GX=\dim X=n$. If the groups $\widehat{H}_n(X,F;G)$ and $H_n(X,F)$ are isomorphic for each closed set $F\subset X$, then $X$ is dimensionally full-valued.
\end{lem}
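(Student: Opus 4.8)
The plan is to show that $\dim_G X = \dim X = n$ for every abelian group $G$, which by the cited characterization (\cite[Theorem 11]{ku}) is equivalent to dimensional full-valuedness. Since always $\dim_G X \le \dim X = n$, it suffices to produce, for each $G$, a closed set $F \subset X$ with $H^n(X,F;G) \ne 0$. The natural candidate is to exploit the hypothesis $h\dim_G X = n$: by definition there is a closed $\Phi \subset X$ with $\widehat{H}_n(X,\Phi;G) \ne 0$, and after possibly enlarging $\Phi$ (so that $H_n(X,F) \cong \widehat{H}_n(X,F;G)$ applies) we get $H_n(X,\Phi) \ne 0$. The idea is then to pass from nontriviality of $n$-th \v{C}ech homology with coefficients in $G$ to nontriviality of $n$-th \v{C}ech cohomology with coefficients in $G$, using a universal-coefficient/duality argument that is available precisely because $G$ is torsion free.

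First I would reduce to coefficients in $\mathbb{Z}$: because $G$ is torsion free, $G$ is flat, so $\widehat{H}_n(X,F;G) \cong \widehat{H}_n(X,F;\mathbb{Z}) \otimes G$ (the exact/Steenrod homology satisfies a universal coefficient theorem with no $\Tor$ term for flat coefficients). Combined with the hypothesis $\widehat{H}_n(X,F;G) \cong H_n(X,F;\mathbb{Z})$ -- wait, the hypothesis is only stated for $H_n(X,F)$ with integer coefficients; so I would instead argue: $h\dim_{\mathbb Z}X = n$ as well (since $h\dim_G X = n$ with $G$ torsion free forces a nontrivial $\widehat{H}_n(X,\Phi;\mathbb{Z})$ via the flatness isomorphism), hence by the hypothesis $H_n(X,\Phi;\mathbb Z) \cong \widehat{H}_n(X,\Phi;\mathbb Z) \ne 0$ for a suitable $\Phi$. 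Now $H_n(X,\Phi;\mathbb Z)$ is a nontrivial \v{C}ech homology group in the top dimension $n = \dim X$; such a group is known to be a direct limit of finitely generated groups, and in top dimension it is in fact free (there is no higher-dimensional obstruction), so it has a nontrivial homomorphism to $\mathbb{Z}$, hence $H^n(X,\Phi;\mathbb Z) \ne 0$ by the \v{C}ech universal coefficient theorem in top dimension. Then $\dim_{\mathbb Z} X = n$, i.e. $\dim X = \dim_{\mathbb Z} X$, which already gives dimensional full-valuedness.

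More carefully, the cleanest route: establish $\dim_{\mathbb Z} X = n$. Since $\dim_{\mathbb Z}X \le \dim X = n$ always, and since for any compactum $\dim_{\mathbb Z}X = \dim X$ holds iff $X$ is dimensionally full-valued, I only need $\dim_{\mathbb Z}X \ge n$. From $h\dim_{\mathbb Z}X = n$ (justified via flatness of $G$ as above, or directly if $h\dim_{\mathbb Z}X = n$ can be extracted: note $h\dim_{\mathbb Z}X \le h\dim_G X$ is false in general, so flatness is the key) pick $\Phi$ with $\widehat{H}_n(X,\Phi;\mathbb Z) \ne 0$; by hypothesis $H_n(X,\Phi;\mathbb Z) \ne 0$. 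Because $n = \dim X$, standard results on \v{C}ech homology in the critical dimension (e.g. the exact sequence of the pair together with $H_{n+1}(X,\Phi;\mathbb Z) = 0$, or the fact that top-dimensional \v{C}ech homology of a compactum is torsion-free) show $H_n(X,\Phi;\mathbb Z)$ maps onto a nontrivial free group, whence $\Hom(H_n(X,\Phi;\mathbb Z),\mathbb Z) \ne 0$; the \v{C}ech UCT then gives $H^n(X,\Phi;\mathbb Z) \ne 0$, so $\dim_{\mathbb Z}X = n$.

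The main obstacle I anticipate is the passage from homology to cohomology in the top dimension -- specifically, verifying that $H_n(X,\Phi;\mathbb Z)$ (or the absolute group, after manipulating the long exact sequence) is torsion-free, or at least admits a nonzero map to $\mathbb Z$, so that the universal coefficient sequence $0 \to \Ext(H_{n-1},\mathbb Z) \to H^n \to \Hom(H_n,\mathbb Z) \to 0$ forces $H^n \ne 0$. This uses that $n = \dim X$ is the top dimension (there is nothing in dimension $n+1$ to create torsion), and here the torsion-freeness of $G$ is what lets us reduce to $\mathbb Z$ coefficients without a spurious $\Tor$ contribution. Once $\dim_{\mathbb Z}X = n$ is in hand, Kuz'minov's characterization (\cite[Theorem 11]{ku}) closes the argument immediately.
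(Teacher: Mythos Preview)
Your overall plan---reach $H_n(X,\Phi;\mathbb Z)\neq 0$ for some closed $\Phi$ and then deduce dimensional full-valuedness---is exactly the paper's route, but the paper executes both nontrivial passages by citation rather than by the direct arguments you sketch, and your sketches have gaps.

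First, a misreading: in the hypothesis ``$\widehat H_n(X,F;G)$ and $H_n(X,F)$ are isomorphic'' the group $H_n(X,F)$ carries the coefficient $G$, not $\mathbb Z$. The paper's proof runs: from $h\dim_GX=n$ pick $\Phi$ with $\widehat H_n(X,\Phi;G)\neq 0$; by the hypothesis $H_n(X,\Phi;G)\neq 0$; since $G$ is torsion free, \cite[Proposition~4.5]{vv} gives $H_n(X,\Phi;\mathbb Z)\neq 0$; finally \cite[Corollary~1]{ko} (Kodama) yields that $X$ is dimensionally full-valued. There is no detour through cohomology and no universal-coefficient manipulation at the level of exact homology.

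Your alternative route---pass to $\mathbb Z$-coefficients in exact homology via flatness, then invoke the hypothesis---does not match the hypothesis as stated, and the tensor formula $\widehat H_n(X,F;G)\cong\widehat H_n(X,F;\mathbb Z)\otimes G$ is not the form the Steenrod/exact UCT takes (in this paper the relevant UCT, \cite[Theorem~3]{sk}, expresses $\widehat H_n$ in terms of \v{C}ech \emph{cohomology} with $\mathbb Z$-coefficients, not $\widehat H_n(-;\mathbb Z)$).

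More importantly, the obstacle you flag is genuine and your proposed resolution does not work. Even granting that top-dimensional \v{C}ech homology $H_n(X,\Phi;\mathbb Z)$ is torsion free (as an inverse limit of free abelian groups), a nonzero torsion-free group need not admit a nonzero homomorphism to $\mathbb Z$: the $p$-adic integers $\mathbb Z_p$ are torsion free yet $\Hom(\mathbb Z_p,\mathbb Z)=0$. So ``torsion free $\Rightarrow$ maps onto a nontrivial free group'' fails, and with it your UCT step to $H^n(X,\Phi;\mathbb Z)\neq 0$. This passage from $H_n(X,\Phi;\mathbb Z)\neq 0$ to full-valuedness is precisely the nontrivial content the paper outsources to Kodama's \cite[Corollary~1]{ko}; you should invoke that result rather than attempt an ad hoc UCT argument.
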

\begin{proof}
Since $h\dim_GX=n$, there exists a closed set $\Phi\subset X$ with $\widehat{H}_n(X,\Phi;G)\neq 0$. So, $H_n(X,\Phi;G)\neq 0$, and according to \cite[Proposition 4.5]{vv}, $H_n(X,\Phi;\mathbb Z)$ is also non-trivial. Finally, by \cite[Corollary 1]{ko}, $X$ is dimensionally full-valued.

\end{proof}
\begin{thm}
Suppose $X$ is a two-dimensional compactum satisfying the following condition: for any $x\in X$ there exists a neighborhood $V$ of $x$ such that the homomorphism $i^k_{\overline V,X}:H_k(\overline V;\mathbb S^1)\to H_k(X;\mathbb S^1)$ is trivial for $k=1,2$. Then $X$ is dimensionally full-valued.
\end{thm}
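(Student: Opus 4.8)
The plan is to reduce the whole statement to the single assertion $\dim_{\mathbb Q}X=2$. Once this is known, $h\dim_{\mathbb Q}X=2$ (for the field $\mathbb Q$ one has $h\dim_{\mathbb Q}=\dim_{\mathbb Q}$, and $T$ is an isomorphism, so $\widehat H_2(X,F;\mathbb Q)\cong H_2(X,F;\mathbb Q)$ for every closed $F$), and Lemma~3.1 applied with $G=\mathbb Q$ gives that $X$ is dimensionally full-valued. Equivalently, it suffices to produce one closed set $F\subset X$ with $H_2(X,F;\mathbb Q)\ne 0$: by \cite[Proposition~4.5]{vv} (as $\mathbb Q$ is torsion free and $\dim X=2$) this forces $H_2(X,F;\mathbb Z)\ne 0$, whence $X$ is dimensionally full-valued by \cite[Corollary~1]{ko}.

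The first step is to transfer the hypothesis to the coefficient groups $\mathbb Q$ and $\mathbb Q_1$. Since $\mathbb S^1$ is divisible, $\mathbb S^1\cong\mathbb Q_1\oplus\mathbb Q^{(\mathfrak c)}$, so both $\mathbb Q$ and $\mathbb Q_1$ are direct summands of $\mathbb S^1$. Because $\mathbb S^1$ admits a compact topology, \cite[Theorem~4]{sk} makes $T$ an isomorphism for $\mathbb S^1$, so the hypothesis is equivalent to the triviality of $\widehat i^{\,k}_{\overline V,X}\colon\widehat H_k(\overline V;\mathbb S^1)\to\widehat H_k(X;\mathbb S^1)$ for $k=1,2$; as exact homology is additive in the coefficient group and $\mathbb Q,\mathbb Q_1$ split off $\mathbb S^1$, the maps $\widehat i^{\,k}_{\overline V,X}$ with coefficients in $\mathbb Q$ and in $\mathbb Q_1$ are trivial for $k=1,2$ as well (and for $\mathbb Q$ they again coincide with the \v{C}ech maps). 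Thus $X$ satisfies the analogue of the hypothesis for $\mathbb Q$ and for $\mathbb Q_1$.

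Next I would repeat the argument of Proposition~2.2 with $G=\mathbb Q_1$. Here $d_{\mathbb Q_1}X=\dim X=2$, hence $h\dim_{\mathbb Q_1}X=2$ and $hd_{\mathbb Q_1}X\le 2$. By \cite[Corollary~2]{sk1} choose $x\in X$ with $\varinjlim_{x\in U}\widehat H_2(X,X\setminus U;\mathbb Q_1)\ne 0$ and $2$ the maximal such integer, and pick a neighbourhood $W$ of $x$ with $\overline W$ contained in the hypothesis neighbourhood $V=V_x$ and $\widehat H_2(X,X\setminus W;\mathbb Q_1)\ne 0$; by excision $\widehat H_2(\overline W,\bd\,\overline W;\mathbb Q_1)\ne 0$. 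Since $\widehat i^{\,2}_{\overline W,X}$ factors through $\widehat i^{\,2}_{\overline V,X}$ it is trivial, so $\widehat H_2(\overline W;\mathbb Q_1)=0$ (otherwise $hd_{\mathbb Q_1}X\ge 3$); hence the boundary homomorphism embeds $\widehat H_2(\overline W,\bd\,\overline W;\mathbb Q_1)$ into $\widehat H_1(C;\mathbb Q_1)$, where $C:=\bd\,\overline W$, and its image $L$ is a non-trivial subgroup annihilated by $\widehat i^{\,1}_{C,\overline W}$, hence also by $\widehat i^{\,1}_{C,X}$. Note that $C$ is the boundary of an open set in a two-dimensional space, so $\dim C\le 1$.

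The remaining point — and, I expect, the main obstacle — is to upgrade a non-trivial $\gamma\in L\subset\widehat H_1(C;\mathbb Q_1)$ with $\widehat i^{\,1}_{C,X}(\gamma)=0$ to a non-trivial \emph{rational} class killed by the inclusion, so as to obtain $\widehat H_2(X,C;\mathbb Q)\ne 0$ and conclude by the first paragraph. Here $\dim C\le 1$ is used essentially: $\check H^k(C;\mathbb Z)=0$ for $k\ge 2$ and $\check H^1(C;\mathbb Z)$ is torsion free (a direct limit of first cohomology groups of finite graphs), so by the universal-coefficient description of Steenrod homology $\widehat i^{\,1}_{C,X}$ with coefficients in $\mathbb Q$ (resp. $\mathbb Q_1$) is $\Hom(r,\mathbb Q)$ (resp. $\Hom(r,\mathbb Q_1)$), where $r\colon\check H^1(X;\mathbb Z)\to\check H^1(C;\mathbb Z)$ is the restriction, and $\gamma$ factors through the non-trivial cokernel $N$ of $r$. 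If $N$ is not a torsion group, then $\Hom(N,\mathbb Q)\ne 0$ yields the desired rational class and we are done. The case that $N$ is torsion is exactly the case $\dim_{\mathbb Q}X\le 1$ for this witness, which has to be excluded; I expect this is where the hypothesis is needed beyond its $\mathbb Q_1$-shadow used so far: since $h\dim_{\mathbb S^1}X=\dim X=2$ while $\dim_{\mathbb Q}X\le 1$, the Bockstein description of $\dim_{\mathbb S^1}$ produces a prime $p$ with $\dim_{\mathbb Z/p}X=2$, and running the same localization with $\mathbb Z/p$ (the hypothesis gives triviality of $\widehat i^{\,2}_{\overline V,X}$ with $\mathbb Z/p$-coefficients because $\mathbb Z/p\hookrightarrow\mathbb S^1$ and $\check H^3(X;\mathbb Z)=0$), together with the fact that $\gamma$ already dies in $\overline W$, should close the argument. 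This last bookkeeping is where the delicate part of the proof lies.
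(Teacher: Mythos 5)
There is a genuine gap, and it is exactly where you say it is: the case where the cokernel $N$ of the restriction $r\colon H^1(X;\mathbb Z)\to H^1(C;\mathbb Z)$ is torsion is left open, with only a sketched appeal to a Bockstein argument with $\mathbb Z/p$-coefficients that is not carried out. The paper never meets this case because it does not split $\mathbb S^1$ into $\mathbb Q_1$ and $\mathbb Q$ summands: it keeps the full circle group, identifies (via the Universal Coefficient Theorem of \cite{sk}, using $H^3=0$ and $H^2(\bd\,\overline U;\mathbb Z)=0$) the maps $\widehat{i}^{\,1}_{\bd\,\overline U,X}$ with $\Hom(r,\mathbb S^1)$, and then uses that characters of an abelian group separate points: triviality of $\Hom(r,\mathbb S^1)$ forces triviality of $r$ itself. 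Hence the cokernel of $r$ is the whole group $H^1(\bd\,\overline U;\mathbb Z)$, which is nonzero and torsion free, the coboundary $\partial_X$ embeds it into $H^2(X,\bd\,\overline U;\mathbb Z)$, so this group has elements of infinite order, $H^2(X,\bd\,\overline U;\mathbb Q)\neq 0$, $\dim_{\mathbb Q}X=h\dim_{\mathbb Q}X=2$, and Lemma 3.1 with $G=\mathbb Q$ concludes. By discarding the faithfulness of $\Hom(-,\mathbb S^1)$ and working only with the single class $\gamma$ and the $\mathbb Q_1$-shadow of the hypothesis, you created a torsion obstruction that the paper's duality argument shows never actually arises; as written, your proof is incomplete at its decisive step.

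It is worth noting that the gap can be closed inside your own framework, and more cheaply than your $\mathbb Z/p$ plan: you already transferred the hypothesis to $\mathbb Q$-coefficients, so $\widehat{i}^{\,1}_{C,X}$ with coefficients in $\mathbb Q$ is trivial outright, since it factors through $\widehat{i}^{\,1}_{\overline V,X}$; and $L\neq 0$ forces $H^1(C;\mathbb Z)\neq 0$, which is torsion free, whence $\widehat H_1(C;\mathbb Q)\cong\Hom(H^1(C;\mathbb Z),\mathbb Q)\neq 0$. The exact sequence of the pair $(X,C)$ then gives $\widehat H_2(X,C;\mathbb Q)\neq 0$ directly, with no need to upgrade the particular class $\gamma$ or to discuss $N$ at all; Lemma 3.1 finishes as in your first paragraph. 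Two smaller points: your assertion that $\dim C\le 1$ because $C$ is the boundary of an open set in a two-dimensional space is not valid in general (boundaries of arbitrary open sets can have full dimension); you must choose $W$ from a base of neighborhoods whose boundaries have dimension at most $1$, a choice the paper also makes tacitly when it invokes $H^2(\bd\,\overline U;\mathbb Z)=0$. Also, the splitting $\mathbb S^1\cong\mathbb Q_1\oplus\mathbb Q^{(\mathfrak c)}$ and the naturality of the induced summand decomposition of $\widehat H_*$ should be justified (e.g.\ by the naturality of the Universal Coefficient sequence in the coefficient group), though this step is essentially fine.
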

\begin{proof}
Because $\mathbb S^1$ is a compact group, the exact homology $\widehat H_*(.;\mathbb S^1)$ is naturally isomorphic to \v{C}ech homology $H_*(.;\mathbb S^1)$. So, $d_{\mathbb S^1}X=hd_{\mathbb S^1}X$. Moreover, according to \cite{a}, $d_{\mathbb S^1}X=h\dim_{\mathbb S^1}X=2$. The last equality implies the existence of a point $x\in X$ such that $H^x_2=\varinjlim_{x\in U} \widehat{H}_2(X,X\setminus U;\mathbb S^1)$ is non-trivial, see \cite[Corollary 2]{sk1}. Thus,   $\widehat{H}_2(\overline U,\bd\,\overline U;\mathbb S^1)$, being isomorphic to $\widehat{H}_2(X,X\setminus U;\mathbb S^1)$, is not trivial for all sufficiently small neighborhoods $U$ of $x$. On the other hand, there is a neighborhood $V$ of $x$ such that the homomorphisms
$\widehat{i}^k_{\overline V,X}:\widehat{H}_k(\overline V;\mathbb S^1)\to\widehat{H}_k(X;\mathbb S^1)$, $k=1,2$, are trivial. Consequently, the homomorphisms $\widehat{i}^k_{\overline U,X}$, $k=1,2$, are also trivial for all neighborhoods $U$ of $x$ with $\overline U\subset V$. Hence, $\widehat{H}_2(\overline U;\mathbb S^1)=0$ provided $\overline U\subset V$ (otherwise we would have $hd_{\mathbb S^1}X>2$).

Therefore, for any $U$ with $\overline U\subset V$ we have the exact sequence
{ $$
\begin{CD}
0\to\widehat{H}_{2}(\overline U,\bd\,\overline U;\mathbb S^1)@>{{\partial}}>>\widehat{H}_{1}(\bd\,\overline U;\mathbb S^1)\to \widehat{H}_{1}(\overline U;\mathbb S^1)\ldots
\end{CD}
$$}\\
Since $\widehat{H}_{2}(\overline U,\bd\,\overline U;\mathbb S^1)\neq 0$, $\widehat{H}_{1}(\bd\,\overline U;\mathbb S^1)\neq 0$.

So, for all small neighborhoods $U$ of $x$
the groups $\widehat{H}_2(\overline U,\bd\,\overline U;\mathbb S^1)$ and $\widehat{H}_{1}(\bd\,\overline U;\mathbb S^1)$ are non-trivial, while the homomorphisms
$\widehat{i}^{1}_{\overline U,X}:\widehat{H}_{1}(\overline U;\mathbb S^1)\to\widehat{H}_{1}(X;\mathbb S^1)$ 
are trivial. This implies that the homomorphisms 
$\widehat{i}^{1}_{\bd\,\overline U,X}:\widehat{H}_{1}(\bd\,\overline U;\mathbb S^1)\to\widehat{H}_{1}(X;\mathbb S^1)$ are also trivial.
Since $\dim X=2$, $H^{3}(X;\mathbb Z)=H^{3}(\overline U,\bd\,\overline U;\mathbb Z)=H^{3}(\overline U;\mathbb Z)=0$. So, by the Universal Coefficient Theorem (see \cite[Theorem 3]{sk}), we have the isomorphisms
$$\widehat{H}_{2}(\overline U,\bd\,\overline U;\mathbb S^1)\cong\Hom(H^2(\overline U,\bd\,\overline U;\mathbb Z),\mathbb S^1),$$
$$\widehat{H}_{2}(\overline U;\mathbb S^1)\cong\Hom(H^2(\overline U;\mathbb Z),\mathbb S^1)\hbox{~}\mbox{and}\hbox{~}
\widehat{H}_{2}(X;\mathbb S^1)\cong\Hom(H^2(X;\mathbb Z),\mathbb S^1).$$
Similarly, $\dim\bd\,\overline U\leq 1$ yields $H^{2}(\bd\,\overline U;\mathbb Z)=0$. So,
 $$\widehat{H}_{1}(\bd\,\overline U;\mathbb S^1)\cong\Hom(H^1(\bd\,\overline U;\mathbb Z),\mathbb S^1).$$
Thus, $H^2(\overline U,\bd\,\overline U;\mathbb Z)\neq 0$, $H^1(\bd\,\overline U;\mathbb Z)\neq 0$ and the triviality of the homomorphisms
 $\widehat{i}^{1}_{\bd\,\overline U,X}$ yields the triviality of the inclusion homomorphisms
 $j^1_{X\bd\,\overline U}:H^1(X;\mathbb Z)\to H^1(\bd\,\overline U;\mathbb Z)$. On the other hand, it is well known that the simplicial one-dimensional cohomology groups with integer coefficients are free, so any non-trivial one-dimensional \v{C}ech cohomology group $H^1(.;\mathbb Z)$ is torsion free, see for example \cite[Theorem 2.5]{dr}. In particular, $H^1(\bd\,\overline U;\mathbb Z)$ is torsion free.
 It follows from the exact sequence
{ $$
\begin{CD}
\cdots\to H^{1}(X;\mathbb Z)@>{{j^1_{X\bd\,\overline U}}}>>H^1(\bd\,\overline U;\mathbb Z)@>{{\partial_X}}>>H^2(X,\bd\,\overline U;\mathbb Z)\to\cdots
\end{CD}
$$}\\
that $\partial_X$ is an injective homomorphism. Hence, $H^2(X,\bd\,\overline U;\mathbb Z)$ contains elements of infinite order. 
  This implies
$H^2(X,\bd\,\overline U;\mathbb Q)\neq 0$. So, $\dim_{\mathbb Q}X=2$. On the other hand, by \cite[p.364]{ha}, $\dim_{\mathbb Q}X=h\dim_{\mathbb Q}X$. Finally, Lemma 3.1 yields $X$ is dimensionally full-valued.
\end{proof}

\begin{cor}
Every two-dimensional $lc^2$-compactum is dimensionally full-valued.
\end{cor}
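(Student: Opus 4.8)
The plan is to read off Corollary 3.3 from Theorem 3.2: given a two-dimensional $lc^2$-compactum $X$, I only have to verify the hypothesis of that theorem, i.e.\ that every $x\in X$ has a neighbourhood $V$ for which $i^k_{\overline V,X}\colon H_k(\overline V;\mathbb S^1)\to H_k(X;\mathbb S^1)$ is trivial for $k=1,2$. Since $\mathbb S^1$ is a compact group, \v{C}ech homology and exact homology with $\mathbb S^1$-coefficients are naturally isomorphic, so it is equivalent to make $\widehat i^k_{\overline V,X}$ trivial for $k=1,2$.

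First I would move to the coefficient group $\mathbb S^1$ on the singular level. As $X\in lc^2$ it is $k-lc$ over $\mathbb Z$ for $k=0,1,2$, hence, by the Universal Coefficient Theorem remark of Section 2, $X$ is $k-lc$ with respect to $\mathbb S^1$ for $k=1,2$; moreover $lc^2$ is a local property, so every open subset of $X$ is again $lc^2$. Fixing $x$ and applying the $\mathbb S^1$-versions of $1-lc$ and $2-lc$ in succession to nested neighbourhoods, I obtain an open neighbourhood $V_0$ of $x$ such that $\widetilde i^k_{V_0,X}\colon\widetilde H_k(V_0;\mathbb S^1)\to\widetilde H_k(X;\mathbb S^1)$ is trivial for $k=1,2$; the step from ``trivial into a small neighbourhood'' to ``trivial into $X$'' is the factorization argument already used in the proof of Proposition 2.2. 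Then, exactly as in that proof, I would feed $V_0$ and $X$ — both locally compact metric and $lc^2$ — into the natural transformation between singular homology and exact homology with compact supports (\cite[Proposition 9]{sk} with $n=2$): this gives a commutative square (the one displayed in the proof of Proposition 2.2, now with coefficients $\mathbb S^1$) with rows $\widetilde i^k_{V_0,X}$ and $\widehat i^k_{V_0,X}\colon\widehat H^c_k(V_0;\mathbb S^1)\to\widehat H_k(X;\mathbb S^1)$, whose vertical maps are isomorphisms for $k=1$ and surjective for $k=2$; a chase shows $\widehat i^k_{V_0,X}$ is trivial for $k=1,2$. Picking a neighbourhood $V$ of $x$ with $\overline V\subset V_0$, the map $\widehat H_k(\overline V;\mathbb S^1)\to\widehat H_k(X;\mathbb S^1)$ factors through $\widehat H^c_k(V_0;\mathbb S^1)$ and is therefore trivial for $k=1,2$; translating back to \v{C}ech homology, $V$ is the neighbourhood required by Theorem 3.2, which then finishes the proof.

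The individual steps are bookkeeping — the coefficient change, the nesting of neighbourhoods, and the standard factorization $\widehat H_k(\overline V)\to\widehat H^c_k(V_0)\to\widehat H_k(X)$ — so I do not expect a genuine obstacle. The one point that deserves attention is the index count in the final diagram chase: \cite[Proposition 9]{sk} with $n=2$ yields only surjectivity (not bijectivity) of the vertical map over $V_0$ in degree $2$, but surjectivity is precisely what is needed to kill $\widehat i^2_{V_0,X}$, and this is exactly where the hypothesis $lc^2$ (rather than merely $lc^1$) is consumed, in parallel with the use of the case $k=2$ in the proof of Theorem 3.2.
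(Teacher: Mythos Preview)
Your proposal is correct and follows essentially the same route as the paper: use $lc^2$ to get triviality of the singular inclusion maps in degrees $1$ and $2$, transfer this to \v{C}ech/exact homology via a natural comparison, and then invoke Theorem~3.2. The only cosmetic difference is that the paper cites Marde\v{s}i\'c's singular--\v{C}ech comparison \cite[Theorem~1]{mar} directly, whereas you route through Sklyarenko's singular--exact comparison \cite[Proposition~9]{sk} (exactly as in the proof of Proposition~2.2) together with the identification of exact and \v{C}ech homology for the compact coefficient group $\mathbb S^1$; both yield the needed triviality of $i^k_{\overline V,X}$ for $k=1,2$.
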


\begin{proof}
We already observed the $lc^2$-property implies that the following condition for any group $G$ and open sets $V\subset X$:
\begin{itemize}
\item the groups $\widetilde{H}_k(V,G)$ and $\widetilde{H}_k(X,G)$ are isomorphic
to $H_k(V,G)$ and $H_k(X,G)$, respectively, for all $k\leq 2$ (see \cite[Theorem 1]{mar});
\end{itemize}
Because $X$ is $lc^2$, any point $x\in X$ has a neighborhood $V$ such that the inclusion homomorphisms
$\widetilde{i}^k_{V,X}:\widetilde{H}_k(V;G)\to\widetilde{H}_k(X;G)$ are trivial for $k=1,2$.
So, we can apply Theorem 3.2.
\end{proof}

\textbf{Acknowledgments.} The author thanks the referee for his/her careful reading of the paper.

\end{document}